\def\A{\mathcal A} 
\def\R{\mathbb R}
\def\S{\mathcal S}
\def\Diff{{\rm Diff}}
\def\Vect{{\rm Vect}}
\def\D{\mathcal{D}} 
\def\dTO{\mathfrak{D}} 
\def\F{\mathfrak{F}}
\newcommand{\Tr}{\mathrm{Tr}}
\newcommand{\Id}{\mathrm{Id}}
\renewcommand{\div}{\mathop{\mathrm{div}}}
\newtheorem{Def}{Definition}
\newtheorem{theorem}{Theorem}
\author{Dmitry Pavlov \thanks{d.pavlov@imperial.ac.uk} \\ Imperial College London}
\date{\vspace{-5ex}}
\title{Geometric Discretization of the EPDiff Equations}
\begin{document}

\maketitle

\begin{abstract}
In this paper we develop a geometric discretization of the EPDiff equations in
one-dimensional case. We extend the method presented in~\cite{Pavlov:2010dd} to
apply to all (not only divergence-free) vector fields and use a pseudospectral
representation of a vector field. This method can be extended to a
multidimensional case in a straightforward way.
\end{abstract}

\section{Introduction}
The main objective of this paper is to develop a general method of geometric
discretization for infinite-dimensional systems and apply this method to the
EPDiff equation. Geometric integration has been a very large and active area of
research (see~\cite{Marsden:2001dj} for an overview). Unlike conventional
numerical schemes, geometric integrators are derived from variational principles
and preserve the structure of the original systems. The structure-preserving
nature of these methods allows to capture dynamics without usual numerical
artifacts such as energy or momenta loss.

To construct a variational integrator for an infinite-dimensional system, such as
the EPDiff or Euler equations, one first has to develop a method of discretizing
the configuration space of this system, i.e. the group of
diffeomorphisms. Moreover, we have to replace this group with a
finite-dimensional Lie group in order to preserve the symmetries of the original
system. As the second step we can derive a finite-dimensional system on this group
from  Lagrange-D'Alembert principle. Lastly, we apply standard techniques of
variational integration to discretize time and get an update rule.

The method described below extends one developed in~\cite{Pavlov:2010dd} for
incompressible Euler fluids. Here this method is presented in a general case
applicable to all, not only divergence-free, vector fields. Also, a different
(pseudospectral) representation of the velocity field is used. We will apply
this method to the one-dimensional EPDiff equation and present numerical results
in Section~\ref{sec:results}.

\subsection{The EPDiff equations}
The EPDiff equations comprise a family of geodesic equations on the group of
diffeomorphisms $\Diff(M)$ of a manifold $M$, $\dim M = n$, where the metric is
defined by a norm on the space of vector fields $\Vect(M)$ of the following form:
\begin{equation}
  \label{eq:Lnorm}
  \| v \|^2_L = \int_M (Lv, v) dx.
\end{equation}
Here $(\cdot, \cdot)$ is the inner product on $\R^n$ and $L$ is a positive
definite self-adjoint differential operator. This equation plays a central role
in computational anatomy, where the distance between an image and a template is
measured as a length of a geodesic connecting them. See~\cite{Younes:2009ky} for
details.

Later in this paper we will use the \emph{flat operator} instead of $L$:
\begin{equation}
  \label{eq:flat}
  \flat: v \mapsto v^{\flat} \in \Omega^1(M), \quad \langle v^{\flat}, u \rangle
  = (Lv, u), \; \text{for any } v,u \in \Vect(M),
\end{equation}
where $\Omega(M)$ is the space of one-forms on $M$ and $\langle \cdot, \cdot
\rangle$ is the pairing of a one-form and a vector field.

The EPDiff equations can be derived from the following variational principle:
\begin{equation}
  \label{eq:EPDiffVar}
  \delta \int_0^1 \int_M \langle v^{\flat}, v \rangle dx dt = 0, \quad \delta v
  = \dot\xi + [v,\xi], \quad \xi \big |_{t=0} = \xi \big |_{t=1} = 0.
\end{equation}
The constraints on $\delta v$ are called \emph{Lin constraints}
in~\cite{Marsden:1999wk} and are due to the fact that the variations are taken
along a path on the Lie group $\Diff(M)$ while $v$ belongs to its Lie
algebra. Substituting the expression for $\delta v$ into the integral and using
the fact that the commutator of vector fields is the Lie derivative
$[v, u] = L_v u$, we get
\begin{equation*}
  \int_0^1 \int_M \left( \langle v^{\flat}, \dot\xi \rangle + \langle v^{\flat}, L_v
  \xi \rangle \right) dxdt = 0,
\end{equation*}
which after integration by parts becomes
\begin{equation*}
  \int_0^1 \int_M \langle -\dot v^{\flat} - L_v v^{\flat} - v^{\flat} \div v,
  \xi \rangle dx dt = 0.
\end{equation*}
Thus, we obtain the EPDiff equation:
\begin{equation}
  \label{eq:EPDiff}
  \dot v^{\flat} + L_v v^{\flat} + v^{\flat} \div v = 0.
\end{equation}

Later on in this paper we will consider a special case of the EPDiff equation
when $\dim M = 1$ and $Lv = v - \alpha^2 \partial^2_x v$. In this case the
EPDiff equation becomes the \emph{Camassa-Holm (CH) equation}:
\begin{equation}
  \label{eq:CH}
  \dot m + (mv)_x + mv_x = 0, \quad m = v - \alpha^2 v_{xx},
\end{equation}
which is a well known model for waves in shallow water
(see~\cite{Camassa:1994uj}). This equation is completely integrable and has
soliton solutions called \emph{peakons} which have a discontinuity in the first
derivative. Due to this, solving the CH equation numerically can be challenging.

\subsection{Overview of the method}
To construct a discrete version of the EPDiff equation, we will use the method
introduced in~\cite{Pavlov:2010dd} to discretize the Euler equation of ideal
incompressible fluid. In this paper, however, we extend this method to apply to
the whole space of diffeomorphisms in a pseudospectral representation of the
velocity.

According to this method we replace the group of diffeomorphisms with a group of
matrices, on which we will construct a Lagrangian system with nonholonomic
constraints. The derivation of the finite-dimensional version of the EPDiff
equation on a matrix group will closely follow the derivation of the EPDiff
equation presented above.

\section{General method}

\subsection{Discrete diffeomorphisms}
Following~\cite{Pavlov:2010dd} we will replace a diffeomorphism $g\in\Diff(M)$
by a linear operator $U_g$:
\[
U_g:L_2(M)\to L_2(M),\quad U_g:\phi\mapsto\phi\circ g^{-1},
\]
where $L_2(M)$ denotes the space of square-integrable functions on $M$.
We will consider a finite-dimensional linear operator $q$ as an approximation to
the diffeomorphism $g$ and write $q\leadsto g$ if $q$ approximates $U_g$.

To discretize the linear operator $U_g$ we first need to discretize the space
where it acts, i.e.\ the space of $L_2$ functions on $M$. To do this we fix a
family finite-dimensional spaces $\F_N\subset L_2(M)$, $\dim\F_N = N$ and two
families of operators
\[
\mathbf D_N: L_2(M)\to\F_n,\quad\text{and }\mathbf R_N:\R^N\to\F_N.
\]
We will call the family $\mathbf D_N$ \emph{a discretization} of $L_2(M)$ if
for any function $\phi\in L_2(M)$ the sequence $\phi_N = \mathbf D_N\phi$
converges to $\phi$ as $N\to\infty$. We will call the $N$-dimensional vector
$\phi^d_N = \mathbf R^{-1}_N\phi_{N}$ \emph{a discrete function} and the operator $\mathbf R_N$
\emph{a reconstruction operator}.

Now we can define a discrete diffemorphism as a linear operator acting on
discrete functions:
\begin{Def}
Let $\mathbf D_N$ be a discretization of $L_2$ and $\mathbf R_N$ a
family of reconstruction operators. We will say that a family of linear operators
$q_N:\R^N\to\R^N$ is an \emph{approximation} to a diffeomorphism $g\in\Diff(M)$ and
write $q_N\leadsto g$ if for any function $\phi\in L_2(M)$ we have:
\begin{equation}
  \mathbf{R}_N q_N \mathbf{R}_N^{-1}\mathbf{D}_N\phi\to U_g\phi,\quad\text{when } N\to\infty.
\end{equation}
\end{Def}

\begin{figure}[t]
\[
\begin{tikzcd}[column sep=large]
C^1(M) \arrow{r}{\Diff(M)} \arrow{d}{\mathbf D_{N}} & C^1(M) \arrow{d}{\mathbf D_N} \\
\F_N \arrow{r} \arrow[transform canvas={xshift=0.7ex}]{d}{\mathbf R_N^{-1}} & \F_N \arrow[transform canvas={xshift=0.7ex}]{d}{\mathbf R_N^{-1}} \\
\R^N \arrow[transform canvas={xshift=-0.7ex}]{u}{\mathbf R_N} \arrow{r}{\D(M)} &
\R^N \arrow[transform canvas={xshift=-0.7ex}]{u}{\mathbf R_N}
\end{tikzcd}
\]
\caption{Discretization and reconstruction operators. Here $\F_N$ is a space of
  discrete functions and $\mathbf R_N$ is a bijection. $\D(M)$ is the group of
  discrete diffeomorphisms, which is a finite-dimensional group of linear
  operators.}
\label{discr}
\end{figure}

Thus, to discretize the group of diffeomorphisms we first need to choose a
discretization of $L_2$ functions and then fix a group of linear operators acting on the
discrete functions. Different methods can be used for both of these steps,
we will describe one such method in more detail below. After the set of
discrete diffeomorphisms has been chosen we will denote it $\D(M)$. The
relationship between $\D(M)$ and $\Diff(M)$ is illustrated by the diagram in
Figure~\ref{discr}. Note that the diagram doesn't commute.

\subsection{Discrete vector fields}
To define a discrete vector field let's consider a smooth path $q_t\in\D(M)$ of
discrete diffeomorphisms. A discrete function $\phi^d_0$ is transported by the
flow $q_t$:
\[
\phi^d_t = q_t\phi^d_0.
\]
It satisfies the equation
\begin{equation}
  \dot\phi^d_t = \dot q_t\phi^d_0 = \dot q_t q^{-1}\phi^d_t = U_t\phi^d_t,
\end{equation}
where $U_t = \dot q_t q_t^{-1}$. Note, that this equation is analogous to the
advection equation
\[
\dot\phi_t = -L_{u_t}\phi_t,
\]
where $L_{u_t}$ is the Lie derivative along the vector field $u_t$. Thus, the
linear operator $U_t = \dot q_t q_t^{-1}$ can be considered a discretization of
the Lie derivative, which brings us to the following definition:
\begin{Def}
Let $\mathbf D_N$ be a discretization of $L_2$ and $\mathbf R_N$ a
family of reconstruction operators. We will say that a family of linear operators
$U_N:\R^N\to\R^N$ is an approximation to a vector field $u\in\Vect(M)$ and
write $U_N\leadsto u$ if for any function $\phi\in C^1(M)$ we have:
\begin{equation}
  \mathbf{R}_N U_N \mathbf{R}_N^{-1}\mathbf{D}_N\phi\to -L_u\phi,\quad\text{when } N\to\infty,
\end{equation}
where convergence is assumed to be in $L_2$ norm.
\end{Def}

Now, if we assume that the discrete diffeomorphisms $\D$ from a Lie group, we
can see that the space of discrete vector fields, which we will denote by
$\dTO$, is the Lie algebra of $\D$. Moreover, the commutator $[U, V] = UV-VU$ of
two discrete vector fields is an approximation to the commutator of the
continuous vector fields $u$ and $v$, assuming $U\leadsto u$ and $V\leadsto v$.
If the space of discrete functions $\F$ has dimension $N$, the
space of discrete vector fields may have dimension as large as $N^2$. To make
the discretization computationally tractable we will restrict the discrete
vector fields to belong to a space $\S$ of dimension $O(N)$ instead. However,
the space $\S$ is likely not closed under commutators, $[\S,\S]\nsubseteq \S$,
and therefore we cannot restrict discrete diffeomorphisms to a subgroup of
$\D$. A method to construct a constrained set $\S$ will be outlined below.

For every vector field $v \in \Vect(M)$ we will be able to construct its
discrete version $V \in \S$, thus we will define an operator
$\mathbf S: \Vect(M) \to \S$. We will require this operator to be
right-invertible, so any matrix $V \in \S$ can be reconstructed into a vector
field. Later in this paper we will use a pseudospectral representation in which
a vector field on a circle is represented by its values at $N$ points. The
operator $\mathbf S$ will be defined in~\eqref{eq:S}.

Note that the matrices in the commutator space $[\S, \S]$, however, cannot be
identified with continuous vector fields. See figure~\ref{discrVF}.


\begin{figure}[t]
\[
\begin{tikzcd}[column sep=large]
C^\infty(M) \arrow{r}{\Vect(M)} \arrow{d}{\mathbf D_{N}} & C^\infty(M) \arrow{d}{\mathbf D_N} \\
\F_N \arrow{r}{\mathfrak D}
\arrow[transform canvas={xshift=0.7ex}]{d}{\mathbf R_N^{-1}} &
\F_N \arrow[transform canvas={xshift=0.7ex}]{d}{\mathbf R_N^{-1}} \\
\R^N \arrow[transform canvas={xshift=-0.7ex}]{u}{\mathbf R_N} \arrow{r}{\S} &
\R^N \arrow[transform canvas={xshift=-0.7ex}]{u}{\mathbf R_N}
\end{tikzcd}
\]
\caption{Discretization of vector fields. Here $\S$ is a set of linear operators
on $\R^N$ representing vector fields and $\mathfrak D$ is the Lie algebra of the
group of discrete diffeomorphisms $\D$.
}
\label{discrVF}
\end{figure}

\subsection{Discrete forms and flat operator}
Let's assume the space $\Vect(M)$ is equipped with an inner product
$(\cdot,\cdot)$. A discrete version of this inner product can be defined as
follows:
\begin{Def}
  A family of Hermitian forms $(\cdot, \cdot)^d_N$ on $\dTO_N$ is said to be an
  approximation to the inner product $(\cdot,\cdot)$ if for any pair of vector
  fields $u,\,v\in\Vect(M)$ and its discretization $U_N\leadsto u$, $V_N\leadsto v$,
  such that $U_N\in\S$, $V_N\in\S\cup [\S,\S]$ we have
  \begin{equation}
    (U_N,V_N)^d_N\to (u,v),\quad\text{when } N\to\infty.
  \end{equation}
\end{Def}
Later on we will omit the superscript $d$ in the formula above and simply write
$(U, V)$ for the discrete inner product.

An inner product $(\cdot,\cdot)$ on $\Vect(M)$ defines \emph{a flat operator}
\[
\flat: u\mapsto u^{\flat}\in\Omega^1(M),\quad (u,v) = u^{\flat}(v),\,\text{for any } v\in\Vect(M),
\]
where $\Omega^1(M)$ is the space of one-forms on $M$.

Following~\cite{Pavlov:2010dd} we define a discrete one-form as an object dual to the discrete
vector fields, i.e.\ as a matrix $F$ and a pairing
\[
\langle F, U \rangle = \Tr(FU^*).
\]
This definition of the pairing allows us to define a discrete flat operator
$\flat: U\mapsto U^{\flat}$ as
\begin{equation}
  \flat: U\mapsto U^{\flat},\quad (U,V) = \Tr(U^{\flat} V^*),\,\text{for any } v\in\Vect(M).
\end{equation}


\subsection{Lagrangian mechanics on the group of discrete diffeomorphisms}

Our goal is to construct a Lagrangian system on the group $\D(M)$ of discrete
diffeomorphisms approximating a certain continuous dynamics on $\Diff(M)$. To do this,
we will construct a Lagrangian of the form (see
section~\ref{discreteflatoperator} for an explicit construction of the flat
operator)
\begin{equation}
  L(U) = \frac12 \langle U^{\flat}, U \rangle
\end{equation}
and derive the dynamics from the Lagrange-D'Alembert principle:
\begin{equation}
  \delta\int_0^1 L(U) dt = 0, \quad \delta q q^{-1} \in \S, U \in S, \quad
  \delta q(0) = \delta q(1) = 0.
\end{equation}
The equations describing the dynamics can be easily derived as follows: first,
since $U = \dot q q^{-1}$ we can show that $\delta U$ has to satisfy the Lin
constraint:
\begin{equation}
  \delta U = \dot B + [U, B], \quad \text{where } B = \delta q q^{-1}.
\end{equation}
Second, substituting the Lin constraint into the expression for $\delta L(U)$ we
get
\begin{equation}
  \delta L(U) = \frac12 \langle U^{\flat}, \dot B + [U, B] \rangle.
\end{equation}
Thus the Lagrange-D'Alembert principle may be written as
\[
\int_0^1 \Tr \big({U^{\flat}}^{*} (\dot B + [U, B])\big) dt = 0, \quad \text{for
any } B\in S,
\quad B \big|_{t=0} = B \big|_{t=1} = 0,
\]
which after integration by parts and rearrangement by permuting under the trace
yields
\begin{equation}
\label{conttime}
  \langle \dot U^{\flat} + [U^{*}, U^{\flat}], B \rangle = 0, \quad \text{for any } B
  \in \S. 
\end{equation}

\subsection{Discrete time}
\label{sec:discrete-time}
To discretize time we consider the dynamics is given as a discrete path
$q_0, \ldots, q_K$ on $\D(M)$, where motion is sampled at regular time intervals
$t_k = k\cdot dt$, where $dt$ is a time step.  For a given pair of
configurations $q_k,\, q_{k+1}$ we use one of the following ways to define matrix $U$
for discrete time:
\begin{align*}
q_{k+1}-q_k & =  dt \; U_k \; q_k,                  & \text{(explicit Euler)}, \\
q_{k+1}-q_k & = dt \; U_k \; q_{k+1},              & \text{(implicit Euler)}, \\
q_{k+1}-q_k & = dt \; U_k\; \frac{q_k+q_{k+1}}{2}, & \text{(midpoint rule)}, \\
(q_{k+1} - q_k)\frac{q_{k+1}^{-1} + q_k^{-1}}{2} & = dt U_k, &
\text{(average explicit-implicit)}.
\end{align*}
These four approaches to discretization result in the following four
representations of the discretized variational relations:
\begin{enumerate}
  \item {\it Explicit Euler.} In this case,
$U_k=(q_{k+1}-q_k)/dt\;q_k^{-1}$. The variation $\delta_k U_k$ and
$\delta_{k+1}U_k$ with respect to $q_k$ and $q_{k+1}$ respectively become:
  \[ \delta_k U_k=- \frac{1}{dt} \delta q_k q_k^{-1}-\frac{q_{k+1}-q_k}{dt}
q_k^{-1}\delta q_k q_k^{-1},\]
  \[ \delta_{k+1} U_k=\frac{1}{dt} \delta q_{k+1}q_k^{-1}. \] If we denote,
similarly to the continuous case, $B_k=\delta q_k q_k^{-1}$, we get:
  \[ \delta_k U_k=-\frac{B_k}{dt}+U_kB_k \] and
  \[ \delta_{k+1} U_k=\frac{B_{k+1}}{dt}+B_{k+1}U_k.\]
  
  \item {\it Implicit Euler.} In this case
$U_k=\frac{q_{k+1}-q_k}{dt}\;q_{k+1}^{-1}$. It yields: \[ \delta_kU_k=-
\frac{1}{dt} \delta q_kq_{k+1}^{-1} \] and
  \[ \delta_{k+1}U_k= \frac{1}{dt} \delta
q_{k+1}q_{k+1}^{-1}-\frac{q_{k+1}-q_k}{dt}q_{k+1}^{-1}\delta
q_{k+1}q_{k+1}^{-1}. \] Similarly to the previous case we now obtain:
  \[ \delta_kU_k=-\frac{B_k}{dt}-B_kU_k,\] and
  \[ \delta_{k+1}U_k=\frac{B_{k+1}}{dt}-U_kB_{k+1}. \]
  
  \item {\it Midpoint.} The Eulerian velocity between $q_k$ and $q_{k+1}$ is now
expressed as $U_k=2\frac{q_{k+1}-q_k}{dt}(q_{k+1}+q_k)^{-1}.$ Thus,
  \begin{align*} \delta_k U_k & =-2\frac{\delta q_k}{dt}(q_{k+1}+q_k)^{-1} \\ &
\qquad -2\frac{q_{k+1}-q_k}{dt}(q_{k+1}+q_k)^{-1}\delta q_k(q_{k+1}+q_k)^{-1} \\
& =-\frac1{dt}(2B_k+dt U_kB_k)q_k(q_{k+1}+q_k)^{-1} \\ &
=-\frac1dt(\Id+\frac1{dt} 2 U_k)\;B_k\;(\Id-\frac1{dt} 2 U_k).
  \end{align*}

\item {\it Average Explicit-Implicit.} Here the velocity between $q_k$ and
  $q_{k+1}$ is expressed as an average of the velocities computed with explicit
  and implicit rules:
  \begin{equation}
    U_k = \frac12\frac{1}{dt}(q_{k+1} - q_k) (q_k^{-1} + q_{k+1}^{-1}).
  \end{equation}
In this case the variations $\delta_{k,k+1} U_k$ are also averages of the
corresponding variations:
\begin{align}
  \delta_k U_k =     & -\frac{B_k}{dt} +\frac12 [U_k, B_k], \\
  \delta_{k+1} U_k = & \frac{B_{k+1}}{dt} + \frac12 [B_{k+1}, U_k].
\end{align}

\end{enumerate}

Now that we have these four different ways to compute variations of $U_k$, we
can proceed to derive the corresponding discrete Lagrange-D'Alembert
equations. e define the discrete-space/discrete-time Lagrangian
$L_d(q_k,q_{k+1})$ as
\[
L_d(q_k,q_{k+1})=L(U_k).
\]
The discrete action $\A_d$ along a discrete path is then simply the sum of all
pairwise discrete Lagrangians:
\[ \A_d(q_0,\ldots,q_K)=\sum_{k=0}^{K-1} L_d(q_k,q_{k+1}). \]
We can now use the Lagrange-d'Alembert principle that states that $\delta
\A_d=0$ for all variations of the $q_k$ (for $k=1,\dots,K-1$, with $q_0$ and
$q_K$ being fixed) in $S_q$ while $A_k$ is restricted to $\S$.

Setting the variations of $\A_d$ with respect to $\delta q_k$ to zero for $k\in[1,K-1]$ yields:
\begin{equation}\label{DEL}
\delta_k\left\langle U^\flat_{k-1},U_{k-1}\right\rangle+\delta_k\left\langle
U^\flat_k,U_k\right\rangle=0.
\end{equation}

Now, let's solve it for $U_k$ in the explicit case. Substituting the expressions for
$\delta_kU_k$ and $\delta_k U_{k-1}$ yields:
\[
\Tr\bigl[-U_k^\flat (B_k^*+dt B_k^* U_k^*)+U_{k-1}^\flat (B_k^*+dt U_{k-1}^*B_k^*)\bigr]=0.
\]
Denoting $\dot U_k^\flat=(U_k^\flat-U_{k-1}^\flat)dt^{-1}$ we can rewrite the last
equation as
\begin{equation}
\label{eq:update1}
\Tr[(\dot U_k^\flat -U_k^\flat  U_k^*+U_{k-1}^* U_{k-1}^\flat )B_k^* ]=0.
\end{equation}

Let's fix a basis $B_k$ of the space $\S$, i.e. any matrix $U\in \S$ can be written as
\begin{equation}
\label{eq:2}
U = \sum_k X_k B_k.
\end{equation}
Now let's rewrite the equation~(\ref{eq:update1}) in the coordinates $X$.
First, we have
\[
  U^*_{k-1} U^{\flat}_{k - 1} - U_k^{\flat} U_k^{*} =
  \sum \bar X^{k-1}_i B_i^* X^{k-1}_j B_j^{\flat} -
    X^k_i B_i^{\flat} \bar X^k_j B_j^*
\]
Now, if we denote by $A\cdot B$ the Frobenius product of $A$ and $B$, we can write
\[
\left( U^*_{k-1} U^{\flat}_{k - 1} - U_k^{\flat} U_k^{*} \right)\cdot \bar B_p =
\sum \bar X^{k-1}_i X^{k-1}_j (B_i^* B_j^{\flat})\cdot \bar B_p -
X^k_i \bar X^k_j (B_i^{\flat} B_j^*)\cdot \bar B_p.
\]
Let's denote
\[
(B_i^* B_j^{\flat})\cdot \bar B_p = \Tr (B_i^{*} B_j^{\flat} B_p^{*}) =
(\bar B_{i} \bar B_{p})\cdot B_{j}^{\flat} = \langle B_j^{\flat}, B_i B_p \rangle = C_{ijp},
\]
\[
(B_i^{\flat} B_j^*)\cdot \bar B_p = \Tr(B_i^{\flat} B_j^{*} B_p^{*}) =
(\bar B_{p} \bar B_{j})\cdot B_{i}^{\flat} = \langle B_i^{\flat}, B_p B_j \rangle = D_{ijp}
\]
and
\[
B_i^{\flat}\cdot \bar B_p = E_{ip}.
\]
Then the update rule for the explicit case can be written as
\begin{multline}
\label{eq:explupdate}
\sum_i E_{ip} X^k_{i} - \sum_i E_{ip} X^{k-1}_i +\\
\sum_{i, j} C_{ijp} \bar X^{k-1}_i X^{k-1}_j -
\sum_{i, j} D_{ijp} X^{k}_i \bar X^{k}_j = 0
\end{multline}
Similarly, in the implicit case we get
\begin{multline}
\label{eq:implupdate}
\sum_i E_{ip} X^k_{i} - \sum_i E_{ip} X^{k-1}_i +\\
\sum_{i, j} C_{ijp} \bar X^{k}_i X^{k}_j -
\sum_{i, j} D_{ijp} X^{k-1}_i \bar X^{k-1}_j = 0.
\end{multline}
In the average explicit-implicit case the update rule is the average of the two
formulas above. The midpoint case yields third order terms in $U$ and it's not
considered here.

\section{Pseudospectral discretization}

\subsection{Discrete functions and vector fields}

To illustrate the method, we consider the following case of pseudospectral
discretization. Let's define the space $\S$ of discrete vector fields on $S^1$ using a
pseudospectral representation. Note, that a matrix $U \in \S$ is an
approximation to an operator of Lie derivative $L_u$:
\[
L_u \phi_c = \phi_c' u.
\]
Now we will consider a continuous test function $\phi^c$ being represented by its
truncated Fourier series, i.e. by a vector $(\phi_{-N}, \ldots, \phi_N)$, where
\[
\phi_k = \frac{1}{2\pi}\int_{-\pi}^{\pi} e^{- \imath k x} \phi^c(x) dx, \quad k = -N,
\ldots, N.
\]
We will denote by $D$ the operator of differentiation in the truncated Fourier
space, i.e.
\begin{equation}
  (D\phi)_k = D_{kk}\phi_k = \imath k \phi_k.
\end{equation}

If we know values $u_k$ of a vector field $u(x)$ at points $x_k = -\pi + k\cdot
2\pi/(2N + 1)$
we can define a discrete version of the multiplication operator $\phi^c\mapsto
\phi^c\cdot u$ as
\begin{equation}
  M = F T_u F^{-1},
\end{equation}
where $F$ is the discrete Fourier transform and $(T_u)_{ij} = \delta_{ij} u_i$.

Now, the space $\S$ of discrete vector fields is spanned by matrices $B_k$'s of the form
\begin{equation}
  B_k = M_k D,
\end{equation}
where
\begin{equation}
  M_k = F I_k F^{-1}, \quad (I_k)_{ij} = \delta_{ij}\delta_{ik}.
\end{equation}

To summarize, our discretization consists of the following:
\begin{enumerate}
\item Space of functions
\begin{equation}
 \F_N: \quad \left\{\phi(x) \mid \phi(x) = \sum_{k = -N}^N \phi_k e^{\imath k
     x}\right\}.
\end{equation}

\item Discretization operator:
\begin{equation}
\mathbf D_N: \quad \phi(x) \mapsto (\phi_{-N}, \ldots, \phi_N), \quad \phi_k =
\frac{1}{2\pi}\int_{-\pi}^{\pi} \phi(x) e^{-\imath k x}
\end{equation}

\item Reconstruction operator:
  \begin{equation}
    \mathbf R_N: (\phi_{-N}, \ldots, \phi_N) \mapsto \phi(x) = \sum_{k=-N}^N \phi_k
    e^{\imath kx}
  \end{equation}

\item Discretization of a vector field:
  \begin{equation}
\label{eq:S}
    \mathbf S: v(x) \mapsto F T_v F^{-1} D = \sum X_k B_k,
  \end{equation}
where
\begin{equation}
  T_v =
\begin{pmatrix}
u(x_{-N}) & \ldots & 0 & \ldots & 0 \\
0 & \ldots & u(x_0) & \ldots & 0 \\
0 & \ldots & 0 & \ldots & u(x_N)
\end{pmatrix},
\end{equation}
\begin{equation}
  X_k = u(x_k).
\end{equation}

\end{enumerate}

\subsection{Discrete flat operator}
\label{discreteflatoperator}
Let's now define a flat operator, which is the key ingredient of the method. To
define a pairing between discrete vector fields $U$ and $V$ let's note that
since $U \phi \approx L_u \phi^c$ we have for $e_k = (0, \ldots, 1, \ldots, 0)$:
\begin{equation}
  U e_k \approx -L_u e^{\imath k x} = D_{kk} u e^{\imath k x},
\end{equation}
where $\approx$ is defined in the sense of $L_2$ norm.
If a function $\phi^c$ is represented by a vector $\phi$ then $\phi_0 \approx
\int \phi$. Thus, $(U e_k)_0$ is an approximation to the $-k$-th Fourier
coefficient of $u$ multiplied by $D_{kk}$:
\begin{equation}
  (U e_k)_0 = U_{0k} \approx D_{kk} \int u e^{\imath k x}.
\end{equation}
Therefore, we can define a flat operator through the following pairing:
\begin{equation}
  \langle U^\flat, V \rangle = \sum_k \frac{U_{0k}}{D_{kk}}\frac{\bar V_{0k}}{\bar D_{kk}} +
  \alpha \sum_k U_{0k}\bar V_{0k} = \sum_k U_{0k}\bar V_{0k} \left(\alpha - D_{kk}^{-2}\right).
\end{equation}
It's worth noting that the pseudospectral discretization allows us to construct
a flat operator in a much more straightforward way than, for example,
discretization described in~\cite{Pavlov:2010dd}.

\subsection{Update rule}
Now, let's compute the update rule for the \emph{explicit} case.
\begin{theorem}
  The update rule in the explicit and implicit cases are given by the
  formulas~\eqref{eq:explupdate} and~\eqref{eq:implupdate}, where
  \begin{equation}
    \label{eq:C}
    \sum_{i, j} \bar X_i X_j C_{ijp} = \frac{1}{N} (F^{-1} \bar D F \bar X)_p
    (\overline{F^{-1} H F \bar X})_p \approx \frac{1}{N} u_x m,
  \end{equation}
  \begin{equation}
    \label{eq:D}
    \sum_{i, j} \bar X_i X_j D_{ijp} = \frac{1}{N} \overline{(F^{-1} D F) (X
      \star F^{-1} H F\bar X)} \approx \frac{1}{N} \partial_x (u m)
  \end{equation}
and
\begin{equation}
  \label{eq:E}
  \sum_i X_i E_{ip} = \frac{1}{N} (\bar F^{-1} H \bar F X)_p \approx \frac{1}{N} m.
\end{equation}
\end{theorem}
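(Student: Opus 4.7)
The plan is a direct calculation exploiting two structural observations. First, unpacking $B_k = F I_k F^{-1} D$ gives $(B_k)_{lm} = F_{lk} F^{-1}_{km} D_{mm}$, and the pairing $\langle U^\flat, V\rangle = \sum_m U_{0m}\bar V_{0m}(\alpha - D_{mm}^{-2})$ shows that $B_k^\flat$ has only row $0$ nonzero, so it is the rank-one matrix $e_0 w_k^T$ with $(w_k)_m = (\alpha - D_{mm}^{-2})(B_k)_{0m}$. This rank-one structure reduces each trace $\Tr(B_i^* B_j^\flat B_p^*)$ or $\Tr(B_i^\flat B_j^* B_p^*)$ to a single inner product against row $0$ of a product of $B$'s. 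Second, the linearity identity $\sum_i X_i B_i = F T_v F^{-1} D$ from \eqref{eq:S} gives the working formula $\sum_i X_i (B_i)_{0m} = D_{mm}(\bar F X)_m$, which collapses each outer sum over the coefficients $X_i$ into a DFT.

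Applied to $E_{ip}$, the rank-one collapse yields $\sum_m (B_i^\flat)_{0m}\overline{(B_p)_{0m}}$; the linear identity then introduces $(\bar F X)_m$ in the $i$-sum, the diagonal factor $(\alpha - D_{mm}^{-2}) D_{mm}\overline{D_{mm}} = 1 + \alpha m^2 = H_{mm}$ is exactly the Fourier symbol of $L$, and $\overline{(B_p)_{0m}}$ evaluates the resulting inverse DFT at $x_p$, yielding \eqref{eq:E}. For $C_{ijp}$ and $D_{ijp}$ the identity is applied twice, once to the $X_j$-sum and once to the $\bar X_i$-sum (via conjugation). In the $C$ case the remaining factor evaluates to $(UB_p)_{0m} = N^{-1} D_{mm} e^{\imath m x_p} (\bar F^{-1} D \bar F X)_p$, and using the conjugation identities $\bar F X = \overline{F \bar X}$ and $\bar D = -D$ the two DFT factors combine into the pointwise product in \eqref{eq:C}. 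In the $D$ case the order of the factors is different: $(B_p B_j)_{0m}$ contains the factor $(F^{-1} D F)_{pj} e^{\imath m x_j}$, so the $j$-sum becomes the pseudospectral derivative acting on $X_j$ modulated by $e^{\imath m x_j}$; reassembling the $m$-sum produces the Hadamard product $X \star F^{-1} H F \bar X$ placed under the derivative, giving \eqref{eq:D}.

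The only genuine obstacle is bookkeeping the several $F$ versus $\bar F$ and $D$ versus $\bar D$ conversions needed to match the stated form exactly; once the identity $\sum X_i (B_i)_{0m} = D_{mm}(\bar F X)_m$ is in hand, everything reduces to linear algebra on the explicit DFT matrix. The continuous approximation statements $\approx m$, $\approx u_x m$, $\approx \partial_x(u m)$ then follow from standard spectral convergence on smooth functions: $\bar F^{-1} H \bar F X$ converges to $Lu$ at the grid points and $F^{-1} D F$ is the pseudospectral approximation of $\partial_x$, both with super-algebraic error.
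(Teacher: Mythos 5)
Your proposal is correct and follows essentially the same route as the paper's proof: both expand $B_k = F I_k F^{-1} D$, exploit the fact that the flat pairing only sees row $0$ (your rank-one observation is the paper's formula $\langle B_j^\flat, U\rangle = \sum_s (F I_j F^{-1} D)_{0s}\bar U_{0s}(\alpha - D_{ss}^{-2})$ in disguise), collapse the coefficient sums via $\sum_i X_i F_{0i} F^{-1}_{is} = \tfrac{1}{\sqrt{N}}(\bar F X)_s$, identify $H_{mm}=1+\alpha m^2$ as the symbol of $L$, and finish with standard pseudospectral approximation. The only discrepancy is the dropped $\tfrac{1}{\sqrt{N}}$ in your working identity, which falls under the normalization bookkeeping you already flagged.
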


\begin{proof}
We have
\begin{equation}
  \sum_{i, j} \bar X_i X_j \langle B_j^{\flat}, B_i B_p \rangle =
  \sum_{i, j} \bar X_i X_j \langle B_j^{\flat}, F T_i F^{-1} D F T_p F^{-1} D \rangle,
\end{equation}
where $X_k = u(x_k)$ (we will write $X \approx u$ in this case).
Also,
\begin{equation}
  \langle B_j^{\flat}, U \rangle = \sum_s (F I_j F^{-1} D)_{0s}\bar U_{0s}(\alpha - D_{ss}^{-2}) =
  \sum_s F_{0j} F^{-1}_{js} D_{ss}(\alpha - D_{ss}^{-2})\bar U_{0s}.
\end{equation}
Thus, we can write
\begin{multline*}
  \sum_{i, j} \bar X_i X_j C_{ijp} =
  \sum_{i, j} \bar X_i X_j \langle B_j^{\flat}, B_i B_p \rangle = \\
  \sum_{i, j, s} \bar X_i X_j (\bar F \bar T_i \bar F^{-1} \bar D \bar F \bar T_p \bar F^{-1} \bar D)_{0s}
  F_{0j} F_{js}^{-1}(\alpha D_{ss} - D_{ss}^{-1}) =\\
  \sum_{i, j, s, k} \bar X_i X_j \bar F_{0i} \bar F^{-1}_{ik} \bar D_{kk} \bar F_{kp} \bar F^{-1}_{ps} \bar D_{ss} F_{0j} F^{-1}_{js}
  (\alpha D_{ss} - D_{ss}^{-1}).
\end{multline*}
Since $F$ is unitary and $F_{0i} = \frac{1}{\sqrt{N}}$, we have
\begin{equation*}
  \sum_i \bar X_i \bar F_{0i} \bar F^{-1}_{ik} = \frac{1}{\sqrt{N}} (F \bar X)_{k}
\end{equation*}
and
\begin{equation*}
  \sum_j X_j F_{0j} F^{-1}_{js} = \frac{1}{\sqrt{N}} (\bar FX)_{s}.
\end{equation*}
Now we have
\begin{equation}
  \sum_{i, j} \bar X_i X_j \langle B_j^{\flat}, B_i B_p \rangle = \frac{1}{N}
  (F^{-1} \bar D F \bar X)_p (\overline{F^{-1} H F \bar X})_p,
\end{equation}
where
\begin{equation}
  H = \Id - \alpha D^2. 
\end{equation}
Since $X \approx u$ we have
\[
F^{-1} \bar D F \bar X \approx u_x
\]
and
\[
\overline{F^{-1} H F \bar X} \approx u - \alpha u_{xx}.
\]
Therefore, now we have
\begin{equation}
  \sum_{i, j} \bar X_i X_j C_{ijp} \approx \frac{1}{N} u_x m,
\end{equation}
where $m = u - \alpha u_{xx}$.

Similarly,
\begin{multline*}
  \sum_{i, j} \bar X_i X_j D_{ijp} =
  \sum_{i, j} X_i \bar X_j \langle B_i^{\flat}, B_p B_j \rangle = \\
  \sum_{i, j, s} X_i \bar X_j (\bar F \bar I_p \bar F^{-1} \bar D \bar F \bar I_j \bar F^{-1} \bar D)_{0s}
  F_{0i} F^{-1}_{is} (\alpha D_{ss} - D_{ss}^{-1}) = \\
  \sum_{i, j, s} X_i \bar X_j (\bar F_{0p} \bar F^{-1}_{pk} \bar D_{kk} \bar F_{kj} \bar F^{-1}_{js} \bar D_{ss})
  F_{0i} F^{-1}_{is} (\alpha D_{ss} - D_{ss}^{-1}).
\end{multline*}
We have
\begin{equation*}
  \sum_{i, s} X_i \bar F^{-1}_{js} F_{0i} F_{is}^{-1} H_{ss} = \frac{1}{\sqrt{N}} (\bar F^{-1} H \bar F X)_j,
\end{equation*}
thus
\begin{multline*}
  \sum_{i, j} X_i \bar X_j \langle B_i^{\flat}, B_p B_j \rangle =
  \frac{1}{N} \sum_{j, s} \bar X_j (\bar F^{-1} H \bar F X)_j
  \bar F_{kj} \bar D_{kk} \bar F_{pk}^{-1} = \\
  \frac{1}{N} (\bar F^{-1} \bar D \bar F) (\bar X \star \overline{F^{-1} H F\bar X}) =
  \frac{1}{N} \overline{(F^{-1} D F) (X \star F^{-1} H F\bar X)},
\end{multline*}
where $(X \star Y)_i = X_i Y_i$.
Again, since $X \approx u$ we have
\[
X \star F^{-1} H F\bar X \approx u m
\]
Therefore,
\begin{equation}
  \sum_{i, j} \bar X_i X_j D_{ijp} \approx \frac{1}{N} \partial_x (u m) = \frac{1}{N} (u_x m + u m_x).
\end{equation}

Finally, we compute $\sum_i X_i E_{ip} = \sum_i X_i \langle B_i^{\flat}, B_p \rangle$:
\begin{multline*}
  \sum_i X_i \langle B_i^{\flat}, B_p \rangle =
  \sum_{i, s} X_i (F I_i F^{-1} D)_{0s} (\bar F I_p \bar F^{-1} \bar D)_{0s} (\alpha - D_{ss}^{-2}) = \\
  \sum_{i, s} X_i F_{0i} F^{-1}_{is} D_{ss} \bar F_{0p} \bar F^{-1}_{ps} \bar D_{ss} (\alpha - D_{ss}^{-2}) =
  \frac{1}{N} (\bar F^{-1} H \bar F X)_p \approx \frac{1}{N} m.
\end{multline*}
\end{proof}

\section{Results}
\label{sec:results}
\begin{figure}[t]
\includegraphics[width=200pt]{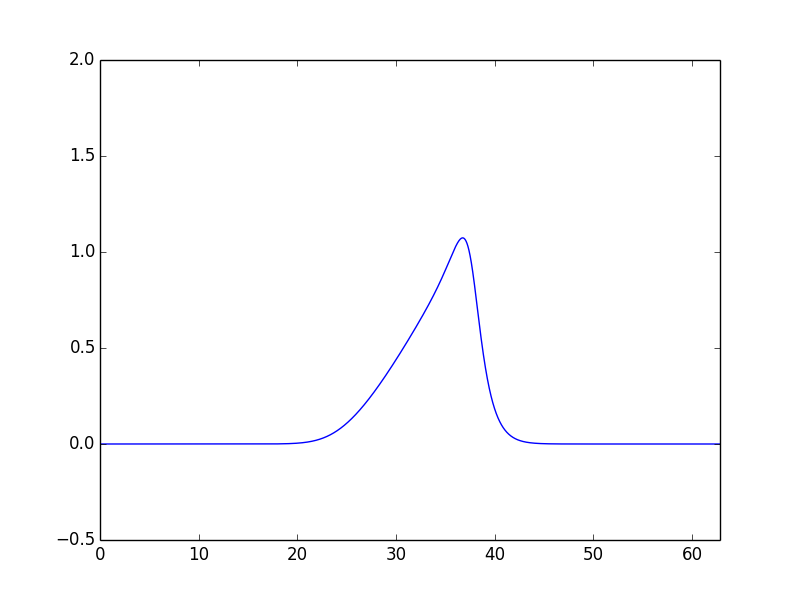}
\includegraphics[width=200pt]{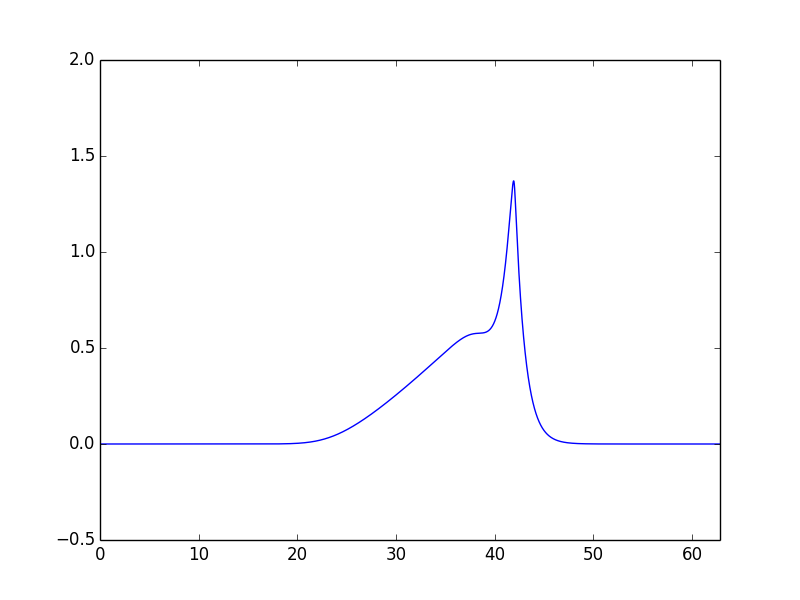}
\includegraphics[width=200pt]{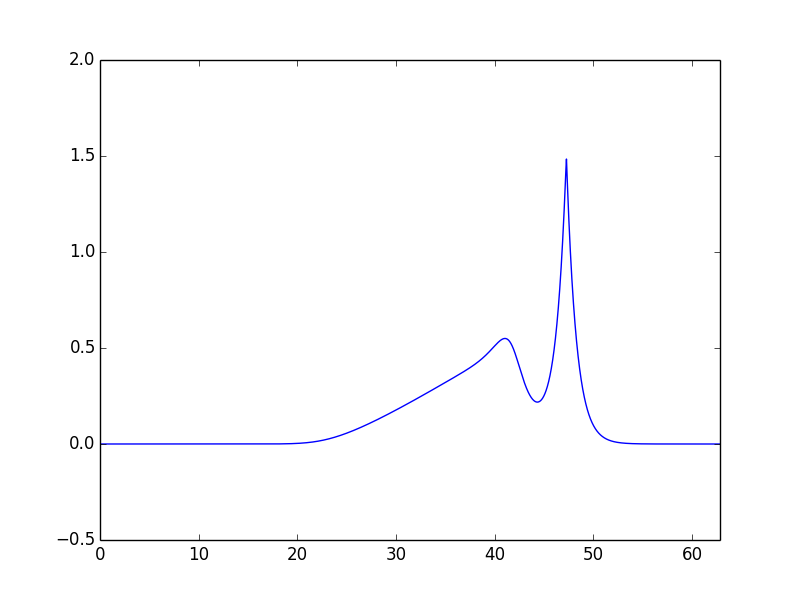}
\includegraphics[width=200pt]{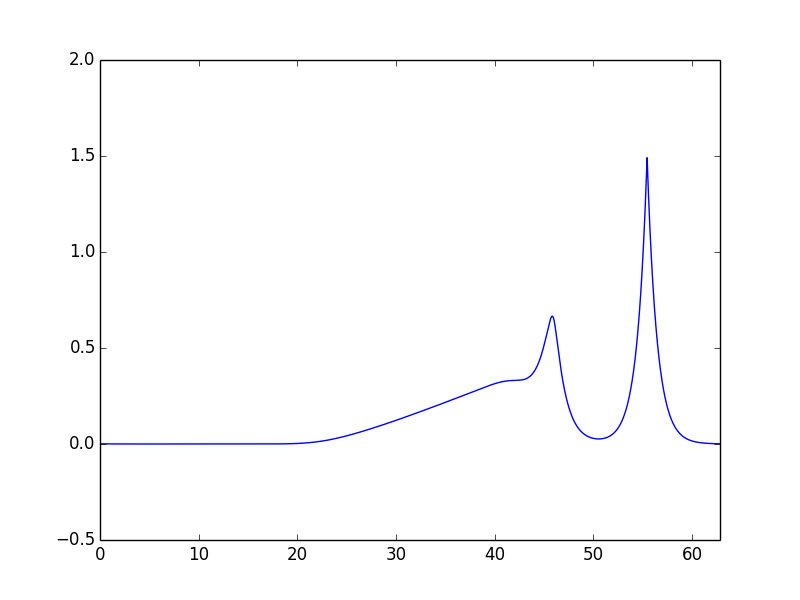}
\caption{Formation of peakons from a gaussian initial condition}
\label{peakonformation}
\end{figure}

\begin{figure}[h]
\centering
\includegraphics[width=200pt]{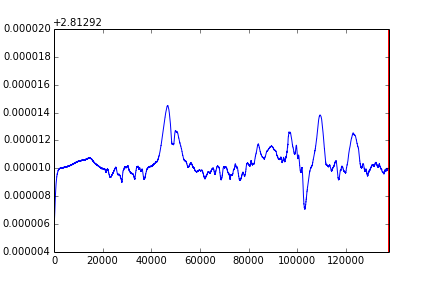}
\caption{Energy behavior for peakon formation}
\label{peakonenergy}
\end{figure}

We have implemented our method for the explicit, implicit and the average cases. In
all numerical tests we see the energy decreasing in the explicit case and
increasing in the implicit case. In the average explicit-implicit case however the
energy is stable. That is, the energy is oscillating around its correct value (see
figure~\ref{peakonenergy}). This behavior is different from energy behavior of a
variational integrator. This difference is a result of imposing nonholonomic
constraints. The same behavior has also been observed in other systems of the
same form, i.e. for the equation~\eqref{conttime} with a different flat operator.

We studied different cases of peakon dynamics, such as formation of peakons from
a gaussian initial condition, interaction of peakons of the same sign and peakon
collisions. Formation of peakons from a gaussian initial condition is shown in
Figure~\ref{peakonformation}. For this case we chose $\alpha=1$, $N=1000$ and
$dt=0.01$. Peakon collision remains a challenge. The simulation leads to
creation of multiple peaks, but remains stable (see
figure~\ref{peakoncollision}). The energy drops when the two peakons collide
initially, but then recovers and remains stable (see
figure~\ref{collisionenergy}).

\begin{figure}[h]
\centering
\includegraphics[width=250pt]{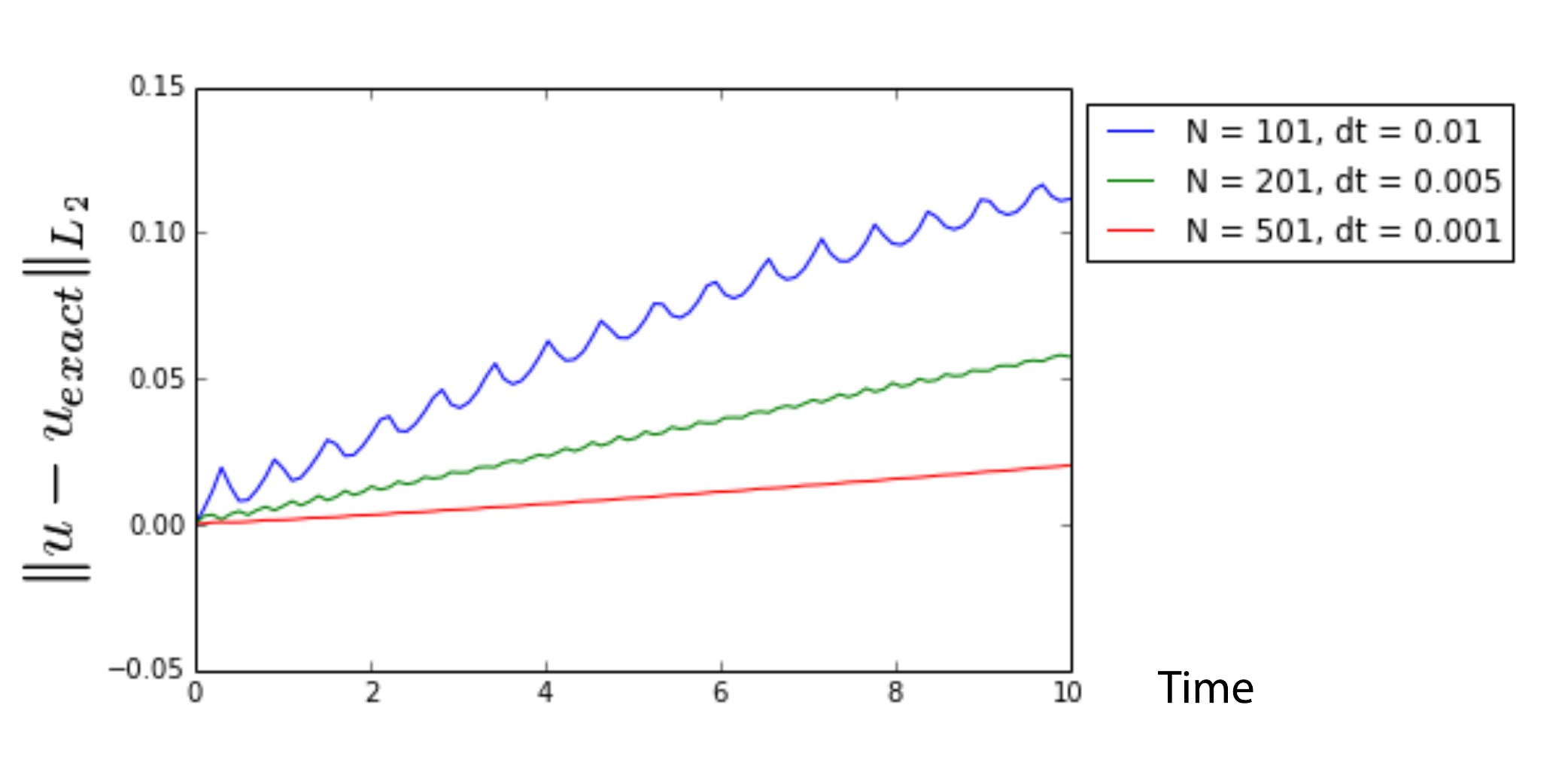}
\caption{Accuracy and convergence for a single peakon}
\label{accuracy}
\end{figure}

\begin{figure}[h]
\centering
\includegraphics[width=250pt]{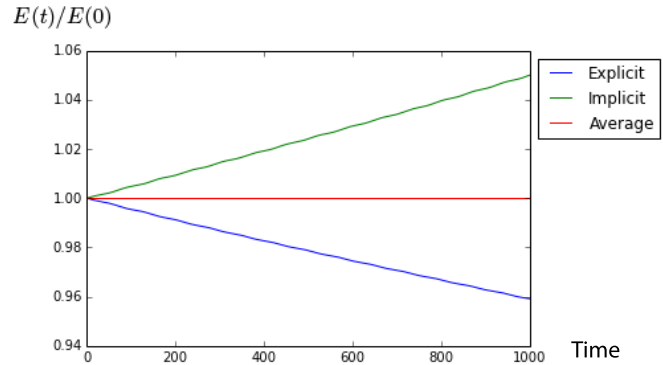}
\caption{Energy behavior for three different methods}
\label{fig:energy}
\end{figure}

\section{Conclusions and summary}
To summarize, we have developed a method of discretization for systems on the
group of diffeomorphisms. This method is presented here for the case of the
Camassa-Holm equation, but can easily be applied to other systems. The method
itself is flexible and can use different representations of vector fields
(operator $\mathbb S$ in Fig.~\ref{discrVF}). The final update rule is derived
from a variational principle with nonholonomic constraints and the resulting
energy behavior is different from that of a variational integrator. Namely, the
energy behavior depends on how the discrete velocity is computed from a pair of
configurations (see Section~\ref{sec:discrete-time}). In the average
explicit-implicit case the energy remains stable over long time.

\section{Future work}
While the time-continuous system~\eqref{conttime} is energy-preserving, the
energy behavior of the time discrete system depends on the choice of
discretization of $U$. One may use an adaptive time step method described
in~\cite{Cortes:2003wk} to construct an energy-preserving integrator. However,
the effect nonholonomic constraints have on a variational integrator remains an
open question.

\section{Acknowledgments}
This work was supported by ERC Advanced Grant FCCA \#267382 supervised by Darryl
Holm. The author is grateful to Darryl Holm, Colin Cotter, Alexis Arnaudon, Alex
Castro, Jaap Eldering, Henry Jacobs and Tomasz Tyranowski for their encouragement
and thoughtful comments.

\begin{figure}[h]
\includegraphics[width=200pt]{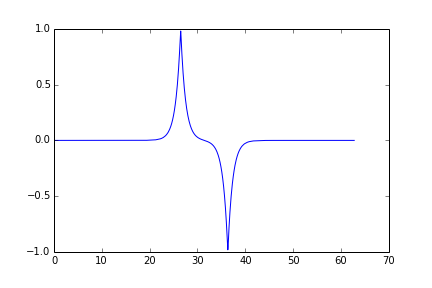}
\includegraphics[width=200pt]{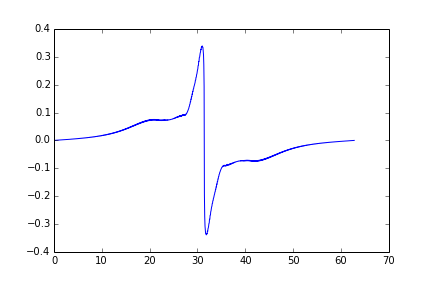}
\includegraphics[width=200pt]{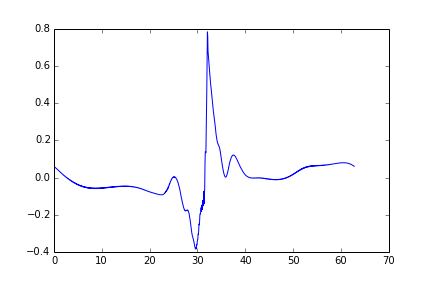}
\includegraphics[width=200pt]{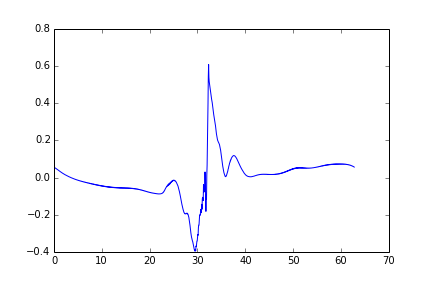}
\caption{Peakon collision sequence shows unstable behavior}
\label{peakoncollision}
\end{figure}
\begin{figure}[h]
\centering
\includegraphics[width=200pt]{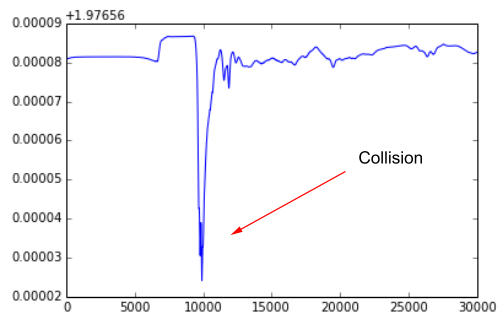}
\caption{Energy behavior for peakon collision. Energy jumps during the initial
  collision but returns to the neighborhood of its correct value after that.}
\label{collisionenergy}
\end{figure}
\clearpage

\bibliographystyle{acm}
\nocite{*}
\bibliography{References.bib}

\end{document}